\newtheorem {theorem}{Theorem}
\newtheorem {corollary}[theorem]{Corollary}
\newcommand{\Kh}{\mathit{Kh}}
\newcommand{\Khr}{\overline{\mathit{Kh}}}
\newcommand{\HFK}{\widehat{\mathit{HFK}}}
\newcommand{\CFK}{\mathit{CFK}^\infty}
\newcommand{\CF}{\widehat{\mathit{CF}}}
\newcommand{\HF}{\widehat{\mathit{HF}}}
\newcommand{\Q}{\mathbb{Q}}
\newcommand{\Z}{\mathbb{Z}}
\title{Khovanov homology detects the figure-eight knot}
\date{}
\author[J.A. Baldwin]{John A. Baldwin}
\thanks{JAB was partially supported by NSF CAREER grant DMS-1454865}
\address{Department of Mathematics, Boston College}
\email{john.baldwin@bc.edu}
\author[N. Dowlin]{Nathan Dowlin}
\thanks{ND was partially supported by NSF grant DMS-1606421}
\address{Department of Mathematics, Columbia University}
\email{ndowlin@math.columbia.edu}
\author[A.S. Levine]{Adam Simon Levine}
\thanks{ASL was partially supported by NSF grant DMS-1806437.}
\address{Department of Mathematics, Duke University}
\email{alevine@math.duke.edu}
\author[T. Lidman]{Tye Lidman}
\thanks{TL was partially supported by a Sloan Fellowship and NSF grant DMS-1709702.}
\address{Department of Mathematics, North Carolina State University}
\email{tlid@math.ncsu.edu}
\author[R. Sazdanovic]{Radmila Sazdanovic}
\thanks{RS was partially supported by NSF grant DMS-1854705.}
\address{Department of Mathematics, North Carolina State University}
\email{rsazdanovic@math.ncsu.edu}
\begin{document}
\maketitle

\begin{abstract}
Using Dowlin's spectral sequence from Khovanov homology to knot Floer homology, we prove that reduced Khovanov homology (over $\mathbb{Q}$) detects the figure-eight knot.
\end{abstract}

Khovanov homology is a powerful invariant of knots and links, and yet its precise connections with topology remain largely mysterious. One of the most basic topological questions one can ask about a knot invariant is which knots it detects. In 2010, Kronheimer and Mrowka used instanton gauge theory to prove that Khovanov homology detects the unknot \cite{KMunknot}. In 2018, Baldwin and Sivek combined gauge theory with ideas from contact topology to prove that Khovanov homology detects the trefoils \cite{BStrefoil}. Until now, these were the only \emph{knots} known to be detected by Khovanov homology, though there have been several additional results (most very recent) regarding detection of links whose components are unknots and trefoils; see Hedden--Ni \cite{hedden-ni}, Batson--Seed \cite{batson-seed}, Baldwin--Sivek--Xie \cite{BSXhopf}, Xie--Zhang \cite{XZunlink,XZsmall}, Lipshitz--Sarkar \cite{LipshitzSarkar}, Martin \cite{Martin}, Li--Xie--Zhang \cite{LXZsmall}.

In this note, we use Dowlin's  spectral sequence from reduced Khovanov homology to knot Floer homology  \cite{Dowlin}  to prove that the former also detects the figure-eight knot $4_1$. Note that the reduced Khovanov homology of $4_1$ (over $\Q$) is given by \[\Khr(4_1;\Q) \cong \Q_{(-4,-2)}\oplus \Q_{(-2,-1)}\oplus \Q_{(0,0)}\oplus \Q_{(2,1)}\oplus \Q_{(4,2)},\] where the subscript $(q,h)$ indicates quantum grading $q$ and homological grading $h$.

Our main theorem is the following. In our conventions,  the $\delta$-grading in reduced Khovanov homology is given by $\delta = q/2-h$.





\begin{theorem}\label{thm:main}
Let $K \subset S^3$ be a knot whose  reduced Khovanov homology over $\Q$ is $5$-dimensional and is supported in a single $\delta$-grading $d$. Then:
\begin{enumerate}
\item If $d=0$, then $K$ is the figure-eight knot.
\item If $d \ne 0$, then, up to mirroring, $d=2$ and $K$ is a genus-2, fibered, strongly quasipositive knot whose bigraded knot Floer homology  over $\mathbb{Q}$ is isomorphic to that of the torus knot $T(2,5)$.
\end{enumerate}
\end{theorem}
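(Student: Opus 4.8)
The plan is to translate the hypothesis into a statement in knot Floer homology via Dowlin's spectral sequence, identify the resulting complex using the structure theory of thin knots, and then apply known Floer-theoretic detection results. First I would run Dowlin's spectral sequence $\Khr(K;\Q)\Rightarrow\HFK(K;\Q)$ of \cite{Dowlin}. Its higher differentials do not preserve the $\delta$-grading, so, the input $\Khr(K;\Q)$ being supported in a single $\delta$-grading $d$, they all vanish and the spectral sequence degenerates. Hence $\HFK(K;\Q)\cong\Khr(K;\Q)$ as ($\delta$-)graded $\Q$-vector spaces; in particular $\HFK(K;\Q)$ is $5$-dimensional and supported in the single $\delta$-grading $d$ (with $\delta=A-M$ in terms of the Alexander and Maslov gradings).

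Next I would pin down $\CFK(K)$. By the structure theorem for thin knots due to Petkova, $\CFK(K)$ is filtered chain homotopy equivalent to a staircase complex $S_{\tau(K)}$ together with $n$ square (``box'') summands; comparing total dimensions gives $5=(2|\tau(K)|+1)+4n$, which forces $(|\tau(K)|,n)\in\{(0,1),(2,0)\}$. Since $S_\tau$ and the box are each supported in $\delta$-grading $\tau$, one reads off $d=\tau(K)$; and since $S_0$ plus one box is $\CFK(4_1)$ while $S_{\pm2}$ is $\CFK(T(2,\pm5))$, it follows that either $d=0$ and $\HFK(K;\Q)\cong\HFK(4_1;\Q)$ as bigraded groups, or $d\ne0$, in which case $d=\pm2$ and (replacing $K$ by its mirror if necessary so that $d=2$) $\HFK(K;\Q)\cong\HFK(T(2,5);\Q)$ as bigraded groups with $\tau(K)=2$. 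Taking graded Euler characteristics recovers $\Delta_K$ as $\Delta_{4_1}$ or $\Delta_{T(2,5)}$ in the two cases and, incidentally, shows that $d$ is even.

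Finally I would conclude case by case from standard facts in knot Floer homology. If $d=0$: from $\HFK(K;\Q)\cong\HFK(4_1;\Q)$, Ghiggini's theorem that knot Floer homology detects the figure-eight knot gives $K=4_1$, proving (1). If $d\ne0$, normalized so that $d=2$: the genus detection theorem of Ozsv\'ath and Szab\'o gives $g(K)=2$, the rank of $\HFK(K,2)$ being $1$ makes $K$ fibered by Ni's fiberedness detection theorem, and since $K$ is fibered with $\tau(K)=2=g(K)$, Hedden's characterization of fibered strongly quasipositive knots shows $K$ is strongly quasipositive; together with $\HFK(K;\Q)\cong\HFK(T(2,5);\Q)$ this proves (2).

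I expect the first step to be the main obstacle: one must check from Dowlin's construction that the spectral sequence really is filtered by the $\delta$-grading, so that a single-$\delta$ input degenerates it with no loss of rank and with the output supported in the same $\delta$-grading. The remaining steps just assemble established theorems, but this is the one recent ingredient, and the grading conventions — together with the effect of mirroring, which negates both $\delta$ and $\tau$ — have to be tracked with care.
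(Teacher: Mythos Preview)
Your route is genuinely different from the paper's and, modulo one point, cleaner. The paper does not invoke Petkova's structure theorem; instead it runs a direct case analysis on the Alexander-graded dimensions of $\HFK(K)$, using Baldwin--Vela-Vick (that $\HFK(K,g-1)\neq 0$ for fibered $K$) to constrain the support and then analyzing the vertical/horizontal differentials on $\CFK(K)$ by hand (including a $\partial^2\neq 0$ contradiction in one subcase) to pin down $\tau$. Your appeal to Petkova short-circuits all of this: once $\HFK(K)$ is thin of rank $5$, the staircase-plus-boxes decomposition immediately gives $|\tau|\in\{0,2\}$ and identifies the bigraded $\HFK$, after which genus detection, Ni, and Hedden finish as you say.

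The gap is precisely the one you flag, and it is more serious than a bookkeeping check. The paper only uses that Dowlin's spectral sequence respects the \emph{relative} $\delta$-grading, so it concludes merely that $\HFK(K)$ is thin of rank $1$, $3$, or $5$, ruling out $1$ and $3$ via unknot and trefoil detection; it does not claim degeneration, nor that the absolute $\delta$-gradings on $\Khr$ and $\HFK$ agree. Your identification $d=\tau(K)$ therefore does not follow from the spectral sequence as stated, and needs an independent argument. The paper supplies one on the Khovanov side: since $\Khr(K)$ is thin, Khovanov's result forces the unreduced $\Kh(K)$ to be thin in $\delta$-gradings $d\pm\tfrac{1}{2}$, whence $d=s(K)/2$; in the figure-eight case $d=0$ by direct computation, and in the genus-$2$ fibered strongly quasipositive case Plamenevskaya/Shumakovitch and Rudolph give $s(K)=2g_4(K)=2g(K)=4$, so $d=2$. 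If you replace your absolute-grading claim by this $s$-invariant argument, your Petkova-based proof goes through. (Also watch conventions: in the paper the spectral sequence targets $\HFK(-K)$, and the Floer $\delta$ is $m-a$ rather than your $A-M$; these sign and mirror choices matter when you try to match $d$ with $\tau$.)
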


Before proving Theorem~\ref{thm:main}, we remark that it is an open question whether knot Floer homology detects $T(2,5)$. Observe the following corollary:
\begin{corollary}
If knot Floer homology over $\Q$ detects $T(2,5)$, then so does reduced Khovanov homology over $\Q$.
\end{corollary}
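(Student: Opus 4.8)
To prove the Corollary it is enough to prove Theorem~\ref{thm:main}: since $T(2,5)$ is alternating with determinant $5$, a knot $K$ with $\Khr(K;\Q)\cong\Khr(T(2,5);\Q)$ has $5$-dimensional reduced Khovanov homology supported in the single $\delta$-grading $2$, so Theorem~\ref{thm:main}(2) gives $\HFK(K;\Q)\cong\HFK(T(2,5);\Q)$, and if knot Floer homology over $\Q$ detects $T(2,5)$ then $K=T(2,5)$. So the plan is to prove the theorem, feeding the hypothesis into Dowlin's spectral sequence and then appealing to the structure theory of $\delta$-thin knots in knot Floer homology.

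Recall that Dowlin's spectral sequence has $E_1$-page $\Khr(K;\Q)$, converges to $\HFK(K;\Q)$, and is compatible with the $\delta$-gradings on the two sides ($\delta=A-M$ on knot Floer homology, with zero overall shift as one sees from the unknot). In particular $\dim_\Q\HFK(K)\le\dim_\Q\Khr(K)=5$, and the associated graded of $\HFK(K;\Q)$ for its $\delta$-filtration is a subquotient of $\Khr(K;\Q)$. First I would pin down $\dim_\Q\HFK(K)$. It is odd, having the same parity as $\dim_\Q\HF(S^3)=1$ via the spectral sequence $\bigoplus_s\HFK(K,s)\Rightarrow\HF(S^3)$, so it lies in $\{1,3,5\}$. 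Because $\Khr(K;\Q)$ is $\delta$-thin and $5$-dimensional, $\det(K)=5$: for a $\delta$-thin knot the determinant, obtained from the reduced Jones polynomial by evaluating at a fourth root of unity, equals the total rank of $\Khr$. If $\dim_\Q\HFK(K)=1$ then $\Delta_K=1$ and $\det(K)=1$; if $\dim_\Q\HFK(K)=3$ then, using the conjugation symmetry of $\HFK$ and $\Delta_K(1)=1$, one gets $\HFK(K,s)$ one-dimensional for $s\in\{g,0,-g\}$ and zero otherwise, so $\Delta_K=t^g-1+t^{-g}$ and $\det(K)=|2(-1)^g-1|\in\{1,3\}$. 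Both possibilities contradict $\det(K)=5$, so $\dim_\Q\HFK(K)=5=\dim_\Q\Khr(K)$; hence the spectral sequence degenerates at $E_1$ and $\HFK(K;\Q)$ is supported in the single $\delta$-grading $d$.

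Now I would invoke the structure theorem for $\delta$-thin knots (Petkova): over $\Q$, $\CFK(K)$ is filtered chain homotopy equivalent to a direct sum of one staircase complex $\mathrm{St}(\tau)$, of total rank $2|\tau|+1$, and $b$ ``box'' summands of rank $4$ each. Since $\dim_\Q\HFK(K)=5$, we have $2|\tau|+1+4b=5$, so either $\tau=0$ and $b=1$, or $|\tau|=2$ and $b=0$. In the first case the single box is centered in Alexander grading $0$ by symmetry, so $\CFK(K)\cong\CFK(4_1)$; then $\HFK(K;\Q)\cong\HFK(4_1;\Q)$, and since knot Floer homology detects $4_1$ (Ghiggini) we conclude $K=4_1$, so $d=0$. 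In the second case $\CFK(K)$ is the $T(2,5)$ staircase or its mirror; after possibly replacing $K$ by its mirror we may take $\CFK(K)\cong\CFK(T(2,5))$, so $\HFK(K;\Q)\cong\HFK(T(2,5);\Q)$. Then $g(K)=2$, the top knot Floer group has rank one so $K$ is fibered (Ni), $\tau(K)=2=g(K)$ so $K$ is strongly quasipositive (Hedden), and $d=2$ since $\HFK(K;\Q)$ is supported in $\delta$-grading $2$. Comparing with the value of $d$: the $4_1$ case has $d=0$ and the torus-knot case has (up to mirroring) $d=2$, and the spectral sequence pins $\HFK(K;\Q)$ in $\delta$-grading $d$, so $d=0$ forces $K=4_1$ (part (1)) while $d\ne0$ forces the torus-knot case (part (2)).

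The step I expect to be most delicate, beyond importing the grading conventions of Dowlin's spectral sequence and the coefficient-field bookkeeping in the thin structure theorem, is the genus-$2$ case. Going from ``$\HFK(K;\Q)$ is $\delta$-thin of rank $5$'' to the full conclusion (fibered, genus $2$, strongly quasipositive, with the knot Floer homology of $T(2,5)$) seems to genuinely require upgrading $\HFK$ to the filtered complex $\CFK$, because the equality $\tau(K)=g(K)$ used in Hedden's criterion, equivalently the fact that $|d|=2$ rather than $1$, is invisible to $\HFK$ alone. The total-rank computation and the elementary Alexander-polynomial arguments ruling out ranks $1$ and $3$ should, by contrast, be routine.
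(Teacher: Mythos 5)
Your deduction of the Corollary from Theorem~\ref{thm:main} is correct and is exactly what the paper intends: the Corollary is stated without proof as an immediate consequence of part (2), and your observation that the residual mirror ambiguity is resolved by the $\delta$-grading (equivalently, by comparing $\Khr(K)$ with $\Khr(T(2,-5))$ directly) closes it properly. The bulk of your write-up, however, is an alternative proof of Theorem~\ref{thm:main} itself, and there you genuinely diverge from the paper. You rule out $\dim \HFK(K)\in\{1,3\}$ via the determinant of a thin knot rather than via unknot detection and the Hedden--Watson trefoil theorem, and you replace the paper's case analysis --- fiberedness detection, Baldwin--Vela-Vick's nonvanishing of $\HFK(K,g-1)$, and the hands-on reconstruction of $\CFK(K)$ showing $\partial^2\neq 0$ --- with Petkova's staircase-plus-box classification of thin complexes. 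Your route is shorter and packages the figure-eight and $T(2,5)$ cases uniformly; the paper's route avoids importing the structure theorem and keeps every step elementary over $\Q$.

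Two caveats if you want your version to stand on its own. First, Petkova's classification is proved over $\Z/2$; the reduction to length-one horizontal and vertical arrows and the splitting into a staircase plus boxes does go through over $\Q$, but this is precisely the kind of coefficient bookkeeping the paper sidesteps by arguing directly with the filtered complex, and you should either cite a characteristic-zero statement or supply the change-of-basis argument. Second, and more substantively, your final identification of $d$ with the $\delta$-grading of $\HFK(K)$ assumes Dowlin's spectral sequence preserves an \emph{absolute} $\delta$-grading with a universal shift calibrated by the unknot. Dowlin's theorem, as used in the paper, only respects the \emph{relative} $\delta$-grading, so the unknot cannot calibrate a shift that might a priori depend on the knot. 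The paper instead pins down $d$ via $s(K)=2d$ (from Khovanov's thinness result) together with $s(K)=2g_4(K)=2g(K)$ for fibered strongly quasipositive knots; you should substitute that argument, or justify the absolute-grading claim from Dowlin's paper, for the step determining $d$. Neither caveat affects the validity of the Corollary itself, which follows from the theorem as you say.
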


The rest of this note is devoted to the proof of Theorem~\ref{thm:main}.  From this point on, we will  work solely with coefficients in $\Q$ since Dowlin's spectral sequence requires working over a ring in which  $2$ is invertible.

\begin{proof}[Proof of Theorem~\ref{thm:main}] Suppose $K\subset S^3$ is a knot whose reduced Khovanov homology $\Khr(K)$ is 5-dimensional and is supported in a single $\delta$-grading $d$.
We will first show via case analysis that $K$ is either the figure-eight or, up to mirroring, a genus-2, fibered strongly quasipositive knot with the same knot Floer homology as $T(2,5)$. We will then conclude that $d = 0$ or $2$, respectively, in these two cases.

By \cite{Dowlin}, there is a spectral sequence from $\Khr(K)$ to the knot Floer homology $\HFK(-K)$ which respects the relative $\delta$-grading. Here, $-K$ is  the mirror of $K$, and the $\delta$-grading on knot Floer homology is given by $\delta = m-a$, where $m$ and $a$ are the Maslov and Alexander gradings, respectively. It follows that $\HFK(K)$ is also supported in a single $\delta$-grading, and has total dimension 1, 3, or 5.  If the dimension is 1 or 3, then $K$ is the unknot \cite[Theorem 1.2]{OS:genus} or the trefoil \cite[Corollary 8]{HeddenWatson}, respectively. (The trefoil detection result in \cite{HeddenWatson} is  stated  with coefficients in $\Z/2\Z$, but it holds over $\Q$ as well.) In either case,  the dimension of $\Khr(K)$ is not 5, a contradiction.   Therefore, we must have that $\dim \HFK(K) = 5$.

Since
\begin{equation}\label{eqn:euler-char}
\sum_{m,a} (-1)^m \dim \HFK_m(K,a) t^a = \Delta_K(t)
\end{equation}
and $\Delta_K(1) = 1$, we have that $\dim \HFK(K,0)$ is odd.  Note that $\dim \HFK(K,0) \neq 5$, since the knot Floer homology of the unknot is 1-dimensional and knot Floer homology detects the genus
\[
g(K) = \max \{ a \mid \HFK(K,a) \neq 0\}
\]
by \cite[Theorem 1.2]{OS:genus}.

We next consider the case that $\dim \HFK(K,0) = 3$.  From the symmetry of \cite[Equation 3]{OS:hfk}: \[\HFK(K,a) \cong  \HFK(K,-a),\] we see that there is exactly one positive Alexander grading $i$ in which $\HFK(K,i)$ is nontrivial, and this group is necessarily 1-dimensional.  First, suppose that $i=1$.  Then $K$ is a genus one fibered knot \cite{Ghiggini, juhasz}, which means that $K$ is either a trefoil or the figure-eight.  Since the trefoils have 3-dimensional knot Floer homology,  $K$ must be the figure-eight in this case.  We now show that this Alexander grading $i$ cannot be strictly greater than one.  Suppose for a contradiction that $i>1$. Then $K$ is  a fibered knot of genus $g=i > 1$ \cite{Ghiggini, Ni, juhasz}.  It then follows from work of Baldwin and Vela-Vick \cite{BVV}  that $\HFK(K,g-1) \neq 0$ as well. (This result is stated in  \cite{BVV} with coefficients in $\Z/2\Z$ but also holds over $\Q$.)  But this implies that $\dim \HFK(K) > 5$, a contradiction.

It   remains to consider the case that $\dim \HFK(K,0) = 1$.  We would like to show  in this case that, up to mirroring, $K$ is  a genus-2, fibered, strongly quasipositive knot.  There are two subcases: $\dim \HFK(K,g) = 2$ or 1, where $g$ is the genus of $K$.  We consider these in turn below.

First, suppose that  $\dim \HFK(K,g) = 2$.   Since $\HFK(K)$ is supported in a single $\delta$-grading, the  symmetry
\begin{equation}\label{eqn:symmetry}
\HFK_m(K,a) \cong \HFK_{m-2a}(K,-a)
\end{equation}
 \cite[Equation 3]{OS:hfk} implies that for some integer $M$, the bigraded knot Floer homology of $K$ is given by
\[
\HFK(K) = \Q^2_{(M,g)} \oplus \Q_{(M-g,0)} \oplus \Q^2_{(M-2g,-g)},
\] where the subscript $(m,a)$ indicates  Maslov grading $m$ and Alexander grading $a$. It then follows from \eqref{eqn:euler-char} that, up to sign, we have \[\Delta_K(t) = 2t^g \pm 1 + 2t^{-g}.\]  But this contradicts $\Delta_K(1)=1.$ Therefore, we cannot have $\dim \HFK(K,g) = 2$.

Next, suppose that $\dim \HFK(K,g) = 1$. Then $K$ is fibered, and we may assume that $g\geq 2$ or else $\dim\HFK(K) =3$, which would contradict our  conclusion  that $\dim\HFK(K)=5$. Furthermore, we have that $\dim \HFK(K,g-1) \neq 0$ by \cite{BVV}, as above. In fact, since $\dim \HFK(K) = 5$, we must have that $\dim \HFK(K,g-1) = 1$. Taking into account the symmetry \eqref{eqn:symmetry} and the fact that $\HFK(K)$ is supported in a single $\delta$-grading, this implies that the bigraded knot Floer homology of $K$ is  given by
 \begin{equation*}\label{eqn:hfk}
\HFK(K) = \Q_{(M,g)} \oplus \Q_{(M-1,g-1)} \oplus \Q_{(M-g,0)} \oplus \Q_{(M-2g+1,1-g)} \oplus \Q_{(M-2g,-g)}.
\end{equation*} Let $x_1,\ldots,x_5$ be generators of these five summands, in order of decreasing Maslov grading.

From here, we  split the analysis  into two cases: $g=2$ and $g>2.$ For both cases, we recall that there is  a differential $\partial$ on the vector space $\HFK(K)$ which lowers the Maslov (homological) grading by 1, and preserves or lowers the Alexander grading, such that the resulting Alexander-filtered chain complex $(\HFK(K),\partial)$ is quasi-isomorphic to the Alexander-filtered  complex $\CF(S^3,K)$. In particular, the homology of $(\HFK(K),\partial)$ computes the computes the Maslov-graded Heegaard Floer homology of $S^3$, i.e.
\begin{equation*}\label{eqn:homology-hfk}
H_*(\HFK(K),\partial) \cong \HF(S^3)\cong \Q_{(0)}.
\end{equation*}

First, let us consider the case that $g = 2$.
Since $\partial$ lowers Maslov grading by $1$, we may assume (up to multiplying the generators by units) that \[\partial(x_i) = x_{i+1} \textrm{ or } 0\] for each $i$. If $\partial(x_1) = 0$, then the fact that \[\dim\HF(S^3)=1\] forces  $\partial(x_2) = x_3$ and $\partial(x_4)=x_5.$ This implies that $\tau(K) = 2$.  Since $K$ is fibered of genus 2, work of Hedden  \cite{Hedden} then implies that $K$ is strongly quasipositive.  Likewise, if $\partial(x_4) = 0$, then  $\tau(K) = -2$ and  $-K$ is strongly quasipositive by a similar argument. If neither $\partial(x_1)$ nor $\partial(x_4)$ is zero, then it must be the case that  $\partial(x_1) = x_2$ and $\partial(x_4) = x_5$. We claim  this cannot happen. Let us assume it does and derive a contradiction. For this,  we  attempt to reconstruct $\CFK(K)$ from the assumption on $\partial$.  Recall that $C\{j = 0\}$ is filtered chain homotopy equivalent to $C\{i = 0\}$ \cite[Proposition 3.9]{OS:hfk}.  The horizontal and vertical components of the differential in $\CFK(K)$ are therefore as shown in Figure~\ref{fig:CFKinfty}, and there cannot be diagonal components since $\HFK(K)$ is supported in a single $\delta$-grading.
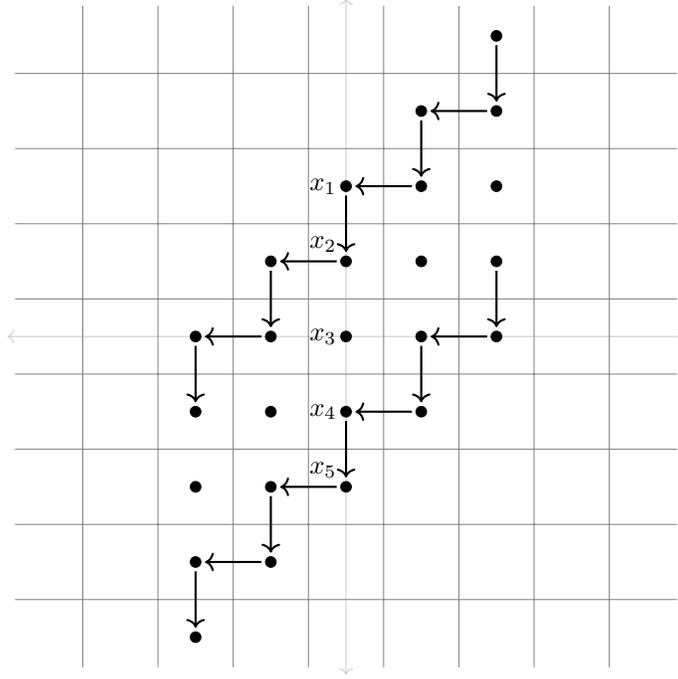
\begin{figure}
\begin{tikzpicture}

	\begin{scope}[thin, black!20!white]
		\draw [<->] (-4, 0.5) -- (5, 0.5);
		\draw [<->] (0.5, -4) -- (0.5, 5);
	\end{scope}
	\draw[step=1, black!50!white, very thin] (-3.9, -3.9) grid (4.9, 4.9);
	
	\filldraw (-1.5, .5) circle (2pt) node[] (A){};
	\filldraw (-1.5, -.5) circle (2pt) node[] (B){};
	\filldraw (-1.5, -1.5) circle (2pt) node[] (C){};
	\filldraw (-1.5, -2.5) circle (2pt) node[] (D){};	
	\filldraw (-1.5, -3.5) circle (2pt) node[] (E){};
	\filldraw (.5, 2.5) circle (2pt) node[] (F){};
	\filldraw (.5, 1.5) circle (2pt) node[] (G){};
	\filldraw (.5, 0.5) circle (2pt) node[] (H){};
	\filldraw (.5, -.5) circle (2pt) node[] (I){};	
	\filldraw (.5, -1.5) circle (2pt) node[] (J){};
	\filldraw (-0.5, 1.5) circle (2pt) node[] (K){};
	\filldraw (-0.5, 0.5) circle (2pt) node[] (L){};
	\filldraw (-0.5, -.5) circle (2pt) node[] (M){};
	\filldraw (-0.5, -1.5) circle (2pt) node[] (N){};	
	\filldraw (-0.5, -2.5) circle (2pt) node[] (O){};
	\filldraw (1.5, 3.5) circle (2pt) node[] (P){};
	\filldraw (1.5, 2.5) circle (2pt) node[] (Q){};
	\filldraw (1.5, 1.5) circle (2pt) node[] (R){};
	\filldraw (1.5, 0.5) circle (2pt) node[] (S){};	
	\filldraw (1.5, -.5) circle (2pt) node[] (T){};
	\filldraw (2.5, 4.5) circle (2pt) node[] (U){};
	\filldraw (2.5, 3.5) circle (2pt) node[] (V){};
	\filldraw (2.5, 2.5) circle (2pt) node[] (W){};
	\filldraw (2.5, 1.5) circle (2pt) node[] (X){};	
	\filldraw (2.5, .5) circle (2pt) node[] (Y){};
	\node [left] at (F) {$x_1$};
	\node [above left] at (G) {$x_2$};
	\node [left] at (H) {$x_3$};
	\node [left] at (I) {$x_4$};
	\node [above left] at (J) {$x_5$};

	\draw [thick, ->] (A) -- (B);
	\draw [thick, ->] (D) -- (E);
	\draw [thick, ->] (F) -- (G);
	\draw [thick, ->] (I) -- (J);
	\draw [thick, ->] (K) -- (L);
	\draw [thick, ->] (N) -- (O);
	\draw [thick, ->] (P) -- (Q);
	\draw [thick, ->] (S) -- (T);
	\draw [thick, ->] (U) -- (V);
	\draw [thick, ->] (X) -- (Y);
	\draw [thick, ->] (G) -- (K);
	\draw [thick, ->] (V) -- (P);
	\draw [thick, ->] (Q) -- (F);
	\draw [thick, ->] (L) -- (A);
	\draw [thick, ->] (Y) -- (S);
	\draw [thick, ->] (T) -- (I);
	\draw [thick, ->] (J) -- (N);
	\draw [thick, ->] (O) -- (D);
\end{tikzpicture}
\caption{The vertical and horizontal differentials in $\CFK(K)$ when $\dim \HFK(K,a) = 1$ for $-2 \leq a \leq 2$ and $\tau(K) \neq \pm 2$.}
\label{fig:CFKinfty}
\end{figure}
However, the differential  in this figure does not square to zero, a contradiction.

We have  shown in the case $g=2$  that, up to mirroring, $K$ is a  genus-2, fibered, strongly quasipositive knot with  $\tau(K)=2$. Let us assume for the remainder of this paragraph that $K$ (and not its mirror) is strongly quasipositive. The fact that the homology of the complex $(\HFK(K), \partial)$ is supported in Maslov grading 0
 then implies that the generator $x_1$ has Maslov grading $M=0$. The bigraded knot Floer homology of $K$ is therefore given by \begin{equation*}\label{eqn:hfk}
\HFK(K) = \Q_{(0,2)} \oplus \Q_{(-1,1)} \oplus \Q_{(-2,0)} \oplus \Q_{(-3,-1)} \oplus \Q_{(-4,-2)},
\end{equation*} which agrees with that of $T(2,5)$, as claimed.

Finally, let us consider the case that $g > 2$.
In order for the dimension of $\HF(S^3)$ to be 1, we must have that $\partial(x_1) = x_2$, $  \partial(x_3) = 0$, and  $\partial(x_4) = x_5$, by Maslov grading considerations.  The same argument as in the case $g=2$ above then shows that the differential in $\CFK(K)$ does not square to zero, a contradiction.

In summary, we have  shown that $K$ is either the figure eight or, up to mirroring, a genus-2, fibered, strongly quasipositive knot with the same  knot Floer homology as $T(2,5)$. It  remains to show that $d = 0$ in the first case and $d=2$ in the second. The first is immediate from the formula for $\Khr(4_1)$ at the beginning. We show that $d=2$ in the second case below.

First, observe that $d = s(K)/2$, where $s(K)$ is Rasmussen's invariant. To see this, a theorem of Khovanov \cite[Proposition 3.6]{KhovanovPatterns} implies that the unreduced invariant $\Kh(K)$ is thin, i.e. supported in two consecutive $\delta$ gradings, which with our normalization must be $d \pm \frac12$. Thus, the nonzero summands of $\Kh(K)$ in homological grading $0$ are in quantum gradings $2d \pm 1$, whence we deduce that $s(K)=2d$. 
In the case that $K$ is a genus-2, fibered, strongly quasipositive knot,  work of Plamenevskaya \cite[Proposition 4]{Plamenevskaya} and independently Shumakovitch \cite[Proposition 1.7]{Shumakovitch} shows that \[s(K) = 2g_4(K) = 2g(K) = 4,\] where the second equality comes from the fact that slice genus of a strongly quasipositive knot is equal to the three-genus \cite{Rudolph}. So $d=s(K)/2 = 2$ in this case, as claimed. This completes the proof of Theorem \ref{thm:main}.
\end{proof}

\bibliographystyle{alpha}
\bibliography{biblio}

\end{document}